\pgfplotsset{compat=1.3}
\newtheorem{remark}{Remark}
\newtheorem{assumption}{Assumption}
\newtheorem{definition}{Definition}
\newtheorem{theorem}{Theorem}
\newtheorem{proposition}{Proposition}
\newcommand{\D}{\mathrm{D}}
\newcommand{\1}{\mathbf{1}} 
\def\be{\begin{equation}}
\def\ee{\end{equation}}
\def\ba{\begin{array}}
\def\ea{\end{array}}
\def\eqa{\begin{eqnarray}}
\def\eqe{\end{eqnarray}}
\newcommand{\mc}{\mathcal}
\begin{document}

%
%
%
%

%

\title{Optimal pricing control in distribution networks with time-varying supply and demand }

\author{Mathias B{\"u}rger, Claudio De Persis, and Frank Allg{\"o}wer
\thanks{M. B{\"u}rger and F. Allg{\"o}wer  thank the German Research Foundation (DFG) for financial support within the Cluster of Excellence in Simulation Technology (EXC 310/2) at the University of Stuttgart.}
\thanks{The work of C. De Persis is supported by the research grants Efficient Distribution
of Green Energy (Danish Research Council of Strategic Research),
Flexiheat (Ministerie van Economische Zaken, Landbouw en Innovatie), and
by a starting grant of the Faculty of Mathematics and Natural Sciences,
University of Groningen.}
\thanks{
M. B{\"u}rger and Frank Allg{\"o}wer are with the Institute for Systems Theory and Automatic Control, University of Stuttgart, Pfaffenwaldring 9, 70550 Stuttgart, Germany, \texttt{ \{mathias.buerger, frank.allgower\}@ist.uni-stuttgart.de}. 
}
\thanks{C. De Persis is with the Faculty of Mathematics and Natural Sciences, University of Groningen, Nijenborgh 4,  9747 AG Groningen, the Netherlands, \texttt{ \{c.de.persis@rug.nl\}}}}

\maketitle

\begin{abstract}
This paper studies the problem of optimal flow control in dynamic inventory systems.
A dynamic optimal distribution problem, including time-varying supply and demand, capacity constraints on the transportation lines, and convex flow cost functions of Legendre-type,  is formalized and solved.
The time-varying optimal flow is characterized in terms of the time-varying dual variables of a corresponding network optimization problem.
A dynamic feedback controller is proposed that regulates the flows asymptotically to the optimal flows and achieves in addition a balancing of all storage levels.

\end{abstract}

\section{Introduction}
Distribution networks are systems composed of several inventories where goods/materials/products are stored, and transportation lines along which the goods are transported. 
The control problem in distribution networks is to assign the flows to the transportation lines such that an external supply of goods, arriving at some of the inventories, is distributed to inventories in such a way that a requested demand can always be satisfied. 
The static problem of distributing a constant flow to transportation lines according to some cost functions has been at the heart of the mathematical theory called \emph{network optimization} \cite{Ford1956}, \cite{Bertsekas1998}, \cite{Rockafellar1998}.
However, real networks must react dynamically on changes in the external conditions, such as time-varying supply and demand.
If the networks operate on a fast time scale 
continuous feedback controllers are required to adapt the flow instantaneously to external changes.
Beyond that, feedback controllers also promise to provide desirable robustness properties to the network.
In addition to achieving an optimal routing, those controllers should at best also distribute the stored goods equally to all inventories.

Feedback control of inventory systems has recently found significant attention.
The control of distribution networks with unknown supply and demand has been investigated in \cite{Bauso2006}, using a combination of robust optimization and control theory. A control strategy is derived that ensures robust feasibility as well as a min-max type optimality.
Robust control strategies for distribution networks have been considered, e.g., in \cite{Blanchini2001}, \cite{Baric2012}.
The aspect of balancing in distribution networks has been studied in \cite{Danielson2013} using a one-step model predictive control scheme.
In  \cite{vanderSchaft2012},  \cite{Wei2013}, 
it was shown that the balancing problem can be formulated in a Hamiltonian framework. 

Network flow and balancing problems are intimately related to consensus or output agreement problems. In \cite{Burger2013a}, \cite{Burger2013} we showed that networks of passive systems with constant external disturbances solve asymptotically certain network flow problems. Furthermore, the phenomena of output agreement and clustering are mathematically related to optimal network flow problems  \cite{Burger2011}.
The connection between output agreement and network flow control has been further studied from an internal model control perspective in \cite{DePersis2013}, \cite{Burger2013b}, \cite{Burger2013c}.
Considering demands and supplies that are generated by an external dynamical system (exo-system), distributed dynamic controllers were proposed that ensure an exact routing as well as a balancing of the inventory levels. In \cite{Burger2013c}, also controllers were proposed that route the flow asymptotically optimal through the network, provided the underlying cost functions are quadratic. 

We address here the problem of balancing and optimal routing  in time-varying distribution networks with convex routing costs and capacity constraints on the transportation lines. 
The time-varying optimal flow problem is formulated, assuming that the supply/demand is generated by an exo-system. 
It is shown that for a certain class of convex cost functions, namely \emph{Legendre-type} functions, the time-varying optimal flow can be characterized in terms of the time-varying dual solutions. 
In particular, an explicit differential equation for the dual solutions is derived. The construction of this differential equation requires to find the analytic solution to a \emph{generalized Dirichlet problem} \cite{Biggs1997}.
Based on this characterization of the time-varying dual variables, a feedback controller is proposed that achieves a balancing of the storage levels and routes the flow asymptotically optimal through the network. 

The remainder of the paper is organized as follows. 
The inventory system model and the relevant assumptions on the supply are introduced in Section \ref{sec.OptimalDistribution}.
The time-varying optimal flow problem is presented in Section \ref{sec.OptRouting}, where also a differential equation for the optimal dual variables is derived.
A feedback controller, ensuring balancing and asymptotic optimal routing is proposed in Section \ref{sec.Feedback}. 
The design procedure is illustrated on an example system in Section \ref{sec.Example}. \\

\textbf{Preliminaries:}
The notation we employ is standard. The set of (non negative) real numbers is denoted by $\mathbb{R}$ ($\mathbb{R}_{\geq}$). The distance of a point $q$ from a set $\mathcal{A}$ is defined as $\mbox{dist}_{\mc{A}} q = \mbox{inf}_{p \in \mc{A}} \|p - q\|$.
Given a matrix $B$ (vector $x$), the notation $[B]_{ik}$ ($[x]_i$) denotes its $ik$-th ($i$-th) entry.
The range-space and null-space of a matrix $B$ are denoted by $\mc{R}(B)$ and $\mc{N}(B)$, respectively.
A graph $\mc{G}=(V,E)$ is an object consisting of a finite set of nodes, $|V| = n$, and edges, $|E| = m$.
Throughout the paper we assume that a graph $\mc{G}$ is assigned an (arbitrary) orientation.
The incidence matrix $B \in \mathbb{R}^{n \times m}$ of the
graph $\mc{G}$, is a $\{0, \pm 1\}$ matrix with $[B]_{ik}$ having value
`+1' if node $i$ is the initial node of edge $k$, `-1' if it is the terminal
node, and `0' otherwise. 
%

For a convex function $\mc{P}: \mathbb{R}^{p} \rightarrow \mathbb{R}$, the \emph{effective domain} is $\mathrm{dom} \mc{P}  = \{ s: \mc{P}(s) < \infty\}$. 
The convex conjugate of $\mc{P}(s)$ is $\mc{P}^{\star}(r) = \sup_{s} \{r^{T}s - \mc{P}(s) \; : \; s \in \mathrm{dom} \mc{P} \; \}$.

%
%
%
%

\section{Time-varying distribution problems} \label{sec.OptimalDistribution}

The objective of this paper is to design distribution control laws for transportation networks with storage that achieve two objectives: (i) balancing of the storage levels, and (ii) an optimal routing of the flows trough the network.
Consider an inventory network with $n$ inventories and $m$ transportation lines, described by a graph $\mc{G}=(V,E)$, and let $B$ be the incidence matrix (with arbitrary orientation). The dynamics of the inventory system is given as
\begin{align}
\begin{split} \label{sys.Inventory1}
\dot{x}(t) &= B\lambda(t) + Pw(t),
\end{split}
\end{align}
where $x \in \mathbb{R}^{n}$ represents the storage level, $\lambda \in \mathbb{R}^{m}$ the flow along the lines, and $Pw$ an external in-/outflow of the inventories, i.e., the supply or demand. 
This basic model (or its discrete time version) is studied  in \cite{Bauso2006}, \cite{vanderSchaft2012}, \cite{Baric2012}, \cite{Danielson2013}, \cite{DePersis2013} and is a relevant model for supply chains (\cite{Alessandri2011}) or data networks (\cite{Moss1983}).

We assume throughout this paper that the supply and demand are varying over time and in particular that they as well as their derivatives are known, that is, all information about the evolution of the flow is known. In compliance with the control theoretic approach of this paper, we assume that the supply/demand is generated by a dynamical system
\begin{align}\label{sys.Inventory_w} 
\dot{w}(t) = s(w(t)), \quad w \in \mathbb{R}^{r},\end{align}
with initial condition $w(0) = w_{0}$.
\begin{assumption} \label{ass.Boundedness}
There exists a compact set $\mc{W}$ that is forward invariant under \eqref{sys.Inventory_w} and the initial conditions $w(0)$ are contained in this set $\mc{W}$.
\end{assumption}
Note that this assumption ensures the boundedness of the supply and demand vector.
Throughout this paper we restrict our attention to networks with balanced supply and demand.
\begin{assumption}\label{ass.BalancedFlow}
The supply/demand of the network is balanced, that is 
\begin{align*}
\1_{n}^{T}Pw(t) = 0 \;\mbox{{\rm for all }} t \geq 0.
\end{align*}
\end{assumption}
We assume that the flow on each line can flow in both directions, but is restricted to satisfy certain capacity bounds. That is, we assume that for each line $k \in \{1,\ldots,m\}$ there exists $\bar{\lambda}_{k} \in \mathbb{R}_{>0}$ and the flow must satisfy
\begin{align} \label{eqn.CapacityConstraints}
- \bar{\lambda}_{k} \leq \lambda_{k}(t) \leq \bar{\lambda}_{k} \quad \mbox{for\;all\;} t \geq 0.
\end{align}
If the flow on one line is unconstrained, we let $\bar{\lambda}_{k}$ be infinity. In the following, we will also write $- \bar{\lambda} \leq \lambda \leq \bar{\lambda}$, meaning that the inequalities hold for each component.

A \emph{cut} of the network is defined to be a signed edge set\footnote{We assume that the direction of the edges is chosen in accordance with the definition of the incidence matrix $B$.} $Q \subset E$ (with positive and negative parts denoted by $Q^{+}$ and $Q^{-}$) such that, for some node set $S \subset V$ 
\begin{align}
\begin{split} \label{eqn.InducedCut}
Q^{+} = \{ k \in E \; :\; k = (i,j)\; \mbox{with}\; i \in S, j \in V \setminus S\} \\
Q^{-} = \{ k \in E \; :\; k = (j,i)\; \mbox{with}\; i \in S, j \in V \setminus S\}.
\end{split}
\end{align}
We will denote the cut induced by a set of nodes $S \subset V$ with $Q(S)$.
%
The \emph{capacity} of a cut is
\begin{align*}
\bar{\lambda}_{Q}= \sum_{k \in Q} \bar{\lambda}_{k}.
\end{align*} 

To ensure feasibility of the problem, we impose the following condition.
\begin{assumption} \label{ass.Cut}
There exists $\epsilon > 0$ such that for any set of nodes $S \subseteq V$ it holds that
\begin{align*}
-\bar{\lambda}_{Q(S)} + \epsilon \leq \sum_{i \in S} [Pw(t)]_{i}  \leq \bar{\lambda}_{Q(S)} - \epsilon.
\end{align*}
\end{assumption}
The assumption ensures that the ``in/out-flow'' to any set of nodes is strictly smaller than the capacity of the corresponding induced cut.
The following result summarizes the conditions that ensure feasibility of the distribution problem.

\begin{proposition}[Feasibility] \label{prop.Feasibility}
Consider the inventory system \eqref{sys.Inventory1} with a time-varying supply/demand generated by the exo-system \eqref{sys.Inventory_w} and capacity constraints \eqref{eqn.CapacityConstraints}. Suppose Assumptions \ref{ass.Boundedness}, \ref{ass.BalancedFlow} and \ref{ass.Cut} hold. Then,
(i) for any flow vector $\lambda \in \mathbb{R}^{m}$ the average inventory level  $x^{\mathrm{avg}}(t) = \frac{1}{n} \1_{n}^{T}x(t)$ remains constant, that is $x^{\mathrm{avg}}(t) = x^{\mathrm{avg}}(0)$ for all $t \geq 0$; and 
(ii) there exists a continuous flow trajectory $\lambda : \mathbb{R}_{\geq 0} \rightarrow \mathbb{R}^{m}$ that is strictly feasible, that means there is a vector valued function $\lambda : \mathbb{R}_{\geq 0} \rightarrow \mathbb{R}^{m}$ such that for each edge $k$ the bounds \eqref{eqn.CapacityConstraints} hold with strict inequality and $B \lambda(t) + Pw(t) = 0$. 
\end{proposition}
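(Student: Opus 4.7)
The plan is to treat the two claims separately. Part (i) is an immediate consequence of $\1_n^T B = 0$, which holds for every incidence matrix, combined with Assumption~\ref{ass.BalancedFlow}: premultiplying \eqref{sys.Inventory1} by $\tfrac{1}{n}\1_n^T$ gives $\dot{x}^{\mathrm{avg}}(t) = \tfrac{1}{n}\1_n^T B\lambda(t) + \tfrac{1}{n}\1_n^T Pw(t) = 0$ for every admissible $\lambda$ and every $t$, so $x^{\mathrm{avg}}(t) = x^{\mathrm{avg}}(0)$.

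For Part (ii) I would first establish pointwise existence of a strictly feasible $\lambda(t)$ and then promote it to a continuous selection. Pointwise existence is precisely the classical Gale--Hoffman feasibility theorem for flows with prescribed divergences (see, e.g., \cite{Rockafellar1998}): for fixed $t$, a vector $\lambda$ with $B\lambda = -Pw(t)$ and $-\bar{\lambda} \leq \lambda \leq \bar{\lambda}$ exists if and only if for every node subset $S \subseteq V$ one has $\sum_{i\in S}[Pw(t)]_i \leq \bar{\lambda}_{Q(S)}$. Assumption~\ref{ass.Cut} supplies exactly this inequality with uniform $\epsilon$-slack. Applying Gale--Hoffman to the shrunken capacities $\bar{\lambda}_k - \delta$ for a small $\delta > 0$ (chosen so that $m\delta < \epsilon$, which preserves the slack for every cut of cardinality at most $m$) then yields a $\lambda$ that is strictly feasible with uniform margin $\delta$ at each $t$.

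To obtain a continuous trajectory, I would introduce the set-valued map
\begin{align*}
F(t) = \bigsetdef{\lambda\in\mathbb{R}^{m}}{B\lambda = -Pw(t),\;-\bar{\lambda}+\delta\1 \leq \lambda \leq \bar{\lambda}-\delta\1},
\end{align*}
which, by the previous step, has nonempty, closed, convex values, and which depends continuously on the parameter $Pw(t)$; since $w(\cdot)$ is the continuous solution of the exo-system \eqref{sys.Inventory_w}, $F$ is lower semicontinuous in $t$. Michael's selection theorem then produces a continuous selection $\lambda(\cdot)$ that is strictly feasible by construction. An elementary alternative is to take $\lambda(t) := \arg\min_{\lambda\in F(t)} \tfrac{1}{2}\|\lambda\|^{2}$: uniqueness follows from strict convexity and continuity of the minimizer in $t$ follows from standard parametric-optimization arguments applied to this QP with continuously moving constraint set.

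The main obstacle is not the existence part, which is entirely standard once Gale--Hoffman is invoked, but rather the continuity claim: promoting pointwise feasibility into a continuous-in-time trajectory requires either a selection theorem or an explicit constructive choice, and one has to verify that the $\epsilon$-slack supplied by Assumption~\ref{ass.Cut} is genuinely strong enough to be preserved by a uniform inflation/deflation of all capacity bounds simultaneously.
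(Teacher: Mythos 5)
Your proof is correct, and part (i) is identical to the paper's. For part (ii) the overall architecture also coincides with the paper's (pointwise feasibility at each $t$, then Michael's selection theorem applied to a lower semicontinuous, convex-valued feasibility map to get a continuous trajectory), but the pointwise step is carried out by a genuinely different key lemma: you invoke the Gale--Hoffman feasibility theorem directly, whereas the paper re-derives essentially the same fact via linear programming duality --- it minimizes $\|\Lambda^{-1}\lambda\|_\infty$ subject to $B\lambda=-Pw$, identifies the extreme points of the dual polytope $\|\Lambda B^{T}u\|_1\le 1$ with cut indicator vectors, and concludes from Assumption \ref{ass.Cut} and strong duality that the optimal value is strictly below one. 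The two routes buy the same conclusion; yours is shorter if Gale--Hoffman may be cited as known, the paper's is self-contained and makes the role of the cut vectors explicit. One place where your version is actually tighter than the paper's: by applying Gale--Hoffman to the $\delta$-shrunken capacities (with $m\delta<\epsilon$) and then running the selection argument on the shrunken set $F(t)$, every continuous selection is automatically \emph{strictly} feasible with uniform margin $\delta$, as claim (ii) requires. The paper instead applies Michael's theorem to the map $\Delta(w)$ defined with the closed constraints $-\bar\lambda\le\lambda\le\bar\lambda$, so its selection is only guaranteed to satisfy the bounds non-strictly; your uniform deflation (or, equivalently, your minimum-norm selection on $F(t)$) closes that small gap. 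Both write-ups assert rather than prove lower semicontinuity of the feasibility map; the strict-feasibility (Slater-type) margin is exactly what justifies it, so it would be worth one sentence in either version.
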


\begin{proof}
To see the first claim note that the average $x^{\mathrm{avg}}(t)$ evolves as 
$\dot{x}^{\mathrm{avg}} = \frac{1}{n}\1_{n}^{T}B\lambda+\frac{1}{n}\1_{n}^{T}Pw(t) = 0.$
To prove the second claim, we show first that at each fixed time $t$, there exists a feasible flow vector. To indicate that we consider a single time instant, we write in the following $w = w(t)$.
Assume without loss of generality that all transportation lines have a finite capacity, i.e., $\bar{\lambda}_{k} < \infty$.
Let $\Lambda = \mathrm{diag} \{ \bar{\lambda}_{1}, \ldots, \bar{\lambda}_{m}\}$.
Consider the linear optimization problem
\begin{align}
\begin{split}  \label{prob.LPPrimal}
\min_{\lambda} & \; \| \Lambda^{-1}\lambda\|_{\infty}\quad \mathrm{s.t.} \; B\lambda = - Pw.
\end{split}
\end{align}
The linear programming dual can be derived  as
\begin{align}
\begin{split} \label{prob.LPDual}
\max_{u} & \; (-Pw)^{T}u \quad \mathrm{s.t.} \;   \| \Lambda B^{T}u\|_{1} \leq 1.
\end{split}
\end{align}
 Note now that the extreme points of the polytope defined by $\| \Lambda B^{T}u\|_{1} \leq 1$ are multiples of the vectors whose elements are zeros and ones (see e.g., \cite{Strang1984}).
  Thus, each extreme point must be of the form $u = c_{Q} e_{Q}$, where $Q$ is a cut induced by some node set $S \subseteq V$ according to \eqref{eqn.InducedCut}, $c_{Q} \in \mathbb{R}$ is a constant such that $|c_{Q}|= \bar{\lambda}_{Q}^{-1}$, and $e_{Q} \in \mathbb{R}^{n}$ is a vector with entries being $[e_{Q}]_{i} =1$ if node $i \in S$ and $[e_{Q}]_{i} =0$ if $i \in V \setminus S$.
It follows now from Assumption \ref{ass.Cut} that \eqref{prob.LPDual} must always have an optimal value strictly smaller than one. From strong duality of linear programs follows that also \eqref{prob.LPPrimal} must have an optimal value strictly smaller than one. 
This shows that at each fixed time instant $t$, there exists a feasible flow vector $\lambda(t)$ that satisfies all constraints, i.e., $- \bar{\lambda} < \lambda(t) < \bar{\lambda}$. 

It remains to prove that there exists a continuous function $\lambda : \mathbb{R}_{\geq 0} \rightarrow \mathbb{R}^{m}$ that represents a feasible flow trajectory.
Consider now the set of feasible flows for a given supply/demand vector $w$, i.e.,
\[ \Delta(w) = \{ \lambda \in \mathbb{R}^{m} \; : \; B \lambda = - Pw, \, - \bar{\lambda} \leq \lambda \leq \bar{\lambda} \}.\]
The set $\Delta(w)$ is compact, convex and has a non-empty relative interior.
One can understand $\Delta$ as a \emph{set-valued map} from $\mathbb{R}^{r}$ to $\mathbb{R}^{m}$. 
As a set-valued map, $\Delta$ is \emph{lower semi-continuous}.\footnote{ The set valued function $\Delta$ is lower semi-continuous at a point $w \in \mathrm{dom}\Delta$ if and only if for any $\lambda \in \Delta(w)$ and any sequence $w_{k}$ converging to $w$, there exists a sequence $\lambda_{k} \in \Delta(w_{k})$ converging to $\lambda$. It is said to be lower semi-continuous if it is lower semi-continuous at every point $w \in \mathrm{dom} \Delta$. }
The existence of a continuous selection $\lambda(w)$ of $\Delta$ follows now from Micheal's Theorem (\cite[Theorem 9.1.2]{Aubin1990}).
Since additionally $w(t)$ is a continuous function of time (i.e., it respects the dynamics \eqref{sys.Inventory_w}), there exists a continuous function $\lambda(t)$  such that $\lambda(t) \in \Delta(w(t))$, i.e., there is a continuous feasible flow trajectory.
\end{proof}

 %

\section{Optimal Time-Varying Routing} \label{sec.OptRouting}

In the following, we assume that the routing of flow along transportation lines of the network incorporates a cost. We seek a control strategy to route the time-varying flow in such a way through the network that the routing cost is minimal at each time instant. We characterize next a class of feed-forward controllers that achieve such an optimal routing.

The flow cost on a transportation line is described by a convex $C^{\infty}$ cost function
 \[ \mc{P}_{k}(\lambda_{k}), \, k = 1,2,\ldots,m \]
  for the flow $\lambda_{k}$. 
  %
%
We aim to characterize the steady state flows that are such that at each time instant the  following \emph{static optimal distribution problem} is solved
\begin{align}
\begin{split} \label{prob.OFP}
\lambda^{w} = \mbox{arg}\min \;& \sum_{k=1}^{m}\mc{P}_{k}(\lambda_{k}) \\
& 0 = B \lambda + Pw,
\end{split}
\end{align}
where $ w = w(t)$ is the supply/demand at time $t$. We will use in the following the short hand notation $\mc{P}(\lambda) = \sum_{k=1}^{m} \mc{P}_{k}(\lambda_{k})$. 

The convex cost functions are such that they incorporate the capacity constraints of the transportation lines, in the sense that $P_{k}(\lambda_{k}) = \infty$ for $|\lambda_{k}| > \bar{\lambda}_{k}$.
More precisely, we assume that the cost functions are of \emph{Legendre type} \cite[p. 258]{Rockafellar1997}.

\begin{assumption}\label{ass.Legendre}
The functions $\mc{P}_{k}$, $k \in \{1,\ldots,m\}$ are strictly convex and differentiable on the open set $\{ \lambda_{k} \; : \; - \bar{\lambda}_{k} < \lambda_{k}(t) < \bar{\lambda}_{k} \}$, and $\lim_{\ell \rightarrow \infty} |\nabla \mc{P}(\lambda^{[\ell]}_{k})| = + \infty$ for any sequence $\lambda_{k}^{[1]},\lambda_{k}^{[2]},\ldots$ such that $|\lambda_{k}^{[\ell]} | < \bar{\lambda}_{k}$ and $\lim_{\ell \rightarrow \infty} |\lambda_{k}^{[\ell]}| = \bar{\lambda}_{k}$.
\end{assumption}
With cost functions of this form, any finite solution to \eqref{prob.OFP}  represents a feasible flow. 
Note that if the assumptions of Proposition \ref{prop.Feasibility} hold, then the optimization problem \eqref{prob.OFP} has always a finite optimal solution.


We can now characterize the optimality conditions of \eqref{prob.OFP}. The Lagrangian function associated to \eqref{prob.OFP} with Lagrange multiplier $\zeta$ is 
\[ \mc{L}(\lambda,\zeta) = \mc{P}(\lambda) + \zeta^{T}(-B \lambda - P w).\]
Now,  $(\lambda^{w},\zeta^{w})$ is an optimal primal/dual solution pair to \eqref{prob.OFP} if the following nonlinear equations hold
\begin{align}
\begin{split} \label{eqn.OptimalityConditions}
\nabla \mc{P}(\lambda^{w}) - B^{T}\zeta^{w} &= 0 \\
B \lambda^{w} + Pw &= 0.
\end{split}
\end{align}
Note that the first condition simply implies $\nabla \mc{P}(\lambda^{w}) \in \mc{R}(B^{T})$ since $\zeta^{w}$ has no further constraints. 
We define the set of optimal routing/supply pairs as
\begin{align*}
\begin{split}
\Gamma = \{ (\lambda,w) \in \mathbb{R}^{m} \times \mc{W} \; : &\; \nabla \mc{P}(\lambda) \in \mc{R}(B^{T}), \\
&\; B \lambda + Pw = 0  \}.
\end{split}
\end{align*}
In particular, $(\lambda^{w},w) \in \Gamma$ if and only if $\lambda^{w}$ is an optimal solution to the static optimal distribution problem \eqref{prob.OFP} with the supply vector $w$.
%

\begin{proposition}[Feed-forward Controller] \label{eqn.FFfeasibility}
Suppose Assumptions  \ref{ass.Boundedness}, \ref{ass.BalancedFlow}, \ref{ass.Cut} and \ref{ass.Legendre} hold. Then, there exists functions $\Phi : \mathbb{R}^{n} \times \mathbb{R}^{r} \rightarrow \mathbb{R}^{n}$ and $\Psi : \mathbb{R}^{n} \rightarrow \mathbb{R}^{m}$, such that 
for any $w(t)$ being a solution to \eqref{sys.Inventory_w} there exists $\zeta^{w}(0)$ such that the solution $\zeta^{w}(t)$ to the dynamical system
\begin{align}
\begin{split} \label{eqn.FF}
\dot{\zeta} &= \Phi(\zeta,w),  \\
\lambda &= \Psi(\zeta)
\end{split}
\end{align}
originating at $\zeta^{w}(0)$ is such that $\lambda^{w}(t) = \Psi(\zeta^{w}(t))$ satisfies $(\lambda^{w}(t),w(t)) \in \Gamma$, where $w(t)$ is the solution to \eqref{sys.Inventory_w} originating at $w(0) = w_{0}$.
\end{proposition}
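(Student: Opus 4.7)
The plan is to parametrize the time-varying optimal flow by a dual (potential) variable $\zeta$, exploit the Legendre structure to make the parametrization invertible, and then derive an ODE for $\zeta$ by differentiating the balance constraint along time. The resulting dynamics will have a ``weighted graph Laplacian'' coefficient matrix, so inverting it is a generalized Dirichlet problem.

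\textbf{Step 1: The output map.} Since each $\mc{P}_k$ is of Legendre type (Assumption~\ref{ass.Legendre}), the gradient $\nabla \mc{P}$ is a $C^\infty$ diffeomorphism from the open box $\prod_k (-\bar{\lambda}_k,\bar{\lambda}_k)$ onto $\mathbb{R}^m$, with inverse $\nabla \mc{P}^\star$. I would therefore define
\[
\Psi(\zeta) := \nabla \mc{P}^\star(B^T \zeta),
\]
so that any $\lambda = \Psi(\zeta)$ automatically satisfies the first optimality condition $\nabla \mc{P}(\lambda) = B^T\zeta \in \mc{R}(B^T)$ in \eqref{eqn.OptimalityConditions} and lies strictly inside the capacity box.

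\textbf{Step 2: The initial condition.} By Proposition~\ref{prop.Feasibility} combined with Assumption~\ref{ass.Legendre}, the static problem \eqref{prob.OFP} at $t=0$ admits a unique strictly feasible minimizer $\lambda^w(0)$. Strong duality (Slater's condition is met via the strictly feasible trajectory of Proposition~\ref{prop.Feasibility}) yields a dual $\zeta^w(0)$ with $\nabla \mc{P}(\lambda^w(0)) = B^T\zeta^w(0)$. The dual is unique modulo $\mc{N}(B^T) = \operatorname{span}(\1_n)$, so I would pin it down by requiring $\1_n^T\zeta^w(0)=0$.

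\textbf{Step 3: The dynamics.} To keep $B\Psi(\zeta(t)) + Pw(t)=0$ invariant along trajectories I differentiate in $t$:
\[
B\,\nabla^2 \mc{P}^\star(B^T \zeta)\,B^T \dot{\zeta} \;=\; -P\,s(w).
\]
Write $L(\zeta) := B\,\nabla^2 \mc{P}^\star(B^T\zeta)\,B^T$. Because $\mc{P}^\star$ is strictly convex on the interior of its effective domain (consequence of the Legendre property), $\nabla^2 \mc{P}^\star \succ 0$ there, hence $L(\zeta)$ is a state-dependent weighted graph Laplacian with $\mc{N}(L(\zeta)) = \operatorname{span}(\1_n)$ (under connectivity of $\mc{G}$). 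By Assumption~\ref{ass.BalancedFlow}, $\1_n^T Pw(t) \equiv 0$, so differentiation gives $\1_n^T P s(w(t))=0$, i.e.\ $-Ps(w)\in \mc{R}(L(\zeta))$. This is the solvability condition of a generalized Dirichlet problem, and I would define
\[
\Phi(\zeta,w) := -\,L(\zeta)^{\dagger}\, P\,s(w),
\]
using the Moore--Penrose pseudoinverse so that $\1_n^T\dot{\zeta}\equiv 0$ and the normalization $\1_n^T\zeta=0$ is preserved.

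\textbf{Step 4: Well-posedness and invariance.} Combining Steps 1--3, along any solution the quantity $B\Psi(\zeta(t)) + Pw(t)$ has zero derivative and vanishes at $t=0$; together with $\nabla \mc{P}(\Psi(\zeta))\in\mc{R}(B^T)$ this gives $(\lambda^w(t),w(t))\in\Gamma$ for all $t$. The step I expect to require the most care is the global well-posedness of the ODE, i.e.\ showing that $\zeta(t)$ does not reach the boundary of the domain where $\nabla^2 \mc{P}^\star$ degenerates. This follows from the Legendre barrier: if $\lambda_k(t) = \Psi_k(\zeta(t))$ approached $\pm\bar{\lambda}_k$, then $|B^T\zeta(t)|_k=|\nabla \mc{P}_k(\lambda_k(t))|$ would blow up, contradicting boundedness of the unique optimizer of \eqref{prob.OFP} guaranteed by the strict cut condition (Assumption~\ref{ass.Cut}) and compactness of $\mc{W}$ (Assumption~\ref{ass.Boundedness}). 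Thus $\lambda^w(t)$ stays in a compact subset of the open box, $L(\zeta(t))$ remains uniformly nonsingular on $\1_n^\perp$, $\Phi$ is $C^\infty$ there, and the solution extends for all $t\geq 0$.
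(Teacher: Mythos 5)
Your proof follows essentially the same route as the paper: parametrize the flow by the dual variable via $\Psi(\zeta)=\nabla\mc{P}^{-1}(B^{T}\zeta)$, initialize at an optimal primal/dual pair at $t=0$, and differentiate the balance constraint along trajectories to obtain the singular system $L(\zeta)\dot{\zeta}=-Ps(w)$, i.e.\ the generalized Dirichlet problem. The only deviation is that you invert this system with the Moore--Penrose pseudoinverse $L(\zeta)^{\dagger}$ rather than the paper's explicit spanning-tree matrix $X(\zeta)$ applied to $-B\dot{\omega}$ --- equivalent here because $\1_{n}^{T}Ps(w)=0$ places the right-hand side in $\mc{R}(L(\zeta))$ and $L(\zeta)$ has constant rank $n-1$ so $L^{\dagger}$ is smooth --- and your Step 4 on global well-posedness addresses a point the paper leaves implicit.
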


The proof of this result provides a method to construct the dynamical system \eqref{eqn.FF}. However, before we present the proof,  some additional discussion is required. \\

The main difficulty associated to the proposed characterization of the set $\Gamma$ relates to the constraint $\nabla \mc{P}(\lambda) \in \mc{R}(B^{T})$. To avoid this constraint, we express the optimality conditions in terms of the \emph{dual solutions} $\zeta$. 
Under the given assumption that the cost functions $\mc{P}_{k}$ are of Legendre type,  the gradients $\nabla \mc{P}_{k}$ are invertible on the open sets $\{\lambda_{k} \; : \; -\bar{\lambda}_{k} < \lambda_{k} < \bar{\lambda}_{k}\}$ \cite{Rockafellar1998}. Since the optimal solution must always be strictly feasible, the two optimality conditions \eqref{eqn.OptimalityConditions} can  be combined to
\begin{align} \label{eqn.DualCond} B \nabla \mc{P}^{-1}(B^{T}\zeta^{w}) + Pw = 0.\end{align}
Bearing this in mind, we define the set of optimal dual solutions as
\[
\Gamma_{D} = \{ (\zeta,w) \in \mathbb{R}^{n} \times \mc{W} : B \nabla \mc{P}^{-1}(B^{T}\zeta) + Pw = 0 \}.
\] 
We want to emphasize two properties of $\Gamma_{D}$. First, if $(\zeta^{w},w) \in \Gamma_{D}$ then $(\zeta^{w}+ c\1,w) \in \Gamma_{D}$ for any $c \in \mathbb{R}$. Second, $(\zeta^{w},w) \in \Gamma_{D}$ if any only if the routing strategy $\lambda^{w} = \nabla \mc{P}^{-1}(B^{T}\zeta^{w})$ satisfies $(\lambda^{w},w) \in \Gamma$. 
\begin{remark}
In the considered framework, the inverse of the gradient of $\mc{P}_{k}$ is the gradient of the convex conjugate $\mc{P}_{k}^{\star}$, i.e., $\nabla \mc{P}^{-1}(\cdot) = \nabla \mc{P}^{\star}(\cdot)$ \cite{Rockafellar1997}. 
For completeness, we want to point out that the dual problem to \eqref{prob.OFP} is
\begin{align}
\begin{split} \label{prob.OPP}
\min_{z, \zeta}  \; &\sum_{k=1}^{m} \mc{P}^{\star}_{k}(z_{k}) + w(t)^{T}P^{T}\zeta \\
& z = B^{T}\zeta,
\end{split}
\end{align}
where $\mc{P}^{\star}_{k}$ are the convex conjugates of $\mc{P}_{k}$. Any optimal solution $\zeta^{w}$ to \eqref{prob.OPP} is such that $(\zeta^{w},w(t)) \in \Gamma_{D}$.
\end{remark}
The main idea pursued here is to design the feed-forward controller for the \emph{dual variables}.
We need to design a dynamics for $\zeta(t)$ such that the constraint \eqref{eqn.DualCond} is satisfied for all times.
Suppose now \eqref{eqn.DualCond} is feasible at the initial time, that is
\begin{align} \label{eqn.vwDynamics0}
B \nabla \mc{P}^{-1}(B^{T}\zeta^{w}(0)) = -Pw(0). \;    
\end{align}
Then, $\dot{\zeta}^{w}(t)$ must be such that
\begin{align} \label{eqn.vwDynamics}
\left(B \D \mc{P}^{-1}(B^{T}\zeta^{w}(t)) B^{T}\right) \dot{\zeta}^{w} = - P\dot{w}(t),
\end{align}
where $\D \mc{P}^{-1}(\cdot) = \mathrm{diag} [\nabla^{2}\mc{P}_{1}^{-1}(\cdot), \ldots, \nabla^{2} \mc{P}_{m}^{-1}(\cdot)] $. Note that $\D \mc{P}^{-1}(\cdot)$ is the Hessian matrix of $\mc{P}^{\star}$. 

The equation \eqref{eqn.vwDynamics} defines an implicit differential equation for $\zeta^{w}$. For the design of the feed-forward controller \eqref{eqn.FF}, we need to transform it into an explicit differential equation.

Under the stated assumptions $\D \mc{P}^{-1}(\cdot)$ is a diagonal matrix, with all entries on the diagonal being positive. Thus, the matrix 
\begin{align}\label{eqn.WeightedLaplacian1}
 L(\zeta^{w}) := B \D \mc{P}^{-1}(B^{T}\zeta^{w}) B^{T}
 \end{align}
 is a weighted, state dependent \emph{Laplacian matrix} of the network, with the weights on the edges being the second derivatives of the cost function's convex conjugates. 
The Laplacian matrix has always one eigenvalue at zero and is therefore not invertible. To find an explicit differential equation for $\zeta^{w}$, we make the following observation. 
 Since $\1_{n}^{T}Pw(t) = 0$ there exists always $\omega(t) \in \mathbb{R}^{m}$ such that
\[ Pw(t) = B \omega(t). \]
One can always define $\omega(t) = B^{T}(BB^{T})^{\dagger}Pw(t)$, where $(BB^{T})^{\dagger}$ is the \emph{generalized inverse} of the matrix $BB^{T}$.\footnote{By the properties of the generalized inverse, it is readily verified that $B\omega(t) = BB^{T}(BB^{T})^{\dagger}Pw(t) = (I- \frac{1}{n}\1_{n}\1_{n}^{T})Pw(t) =Pw(t)$. }
Now, the conditions \eqref{eqn.vwDynamics} can be written as
\begin{align} \label{eqn.GenDirichlet}
L(\zeta^{w}(t)) \dot{\zeta}^{w}(t) = - B\dot{\omega}(t).
\end{align}
Following \cite[Chap. 16]{Biggs1997}, we call the problem to find $\dot{\zeta}^{w}$ a \emph{generalized Dirichlet problem}. 

It is described in \cite[p. 659]{Biggs1997}, how an analytic solution for $\dot{\zeta}^{w}$ in \eqref{eqn.GenDirichlet}  can be constructed.
In particular, one can find analytically a matrix $X(\zeta^{w}) \in \mathbb{R}^{m \times n}$ such that
\begin{align} \label{eqn.TreeSol}
 \dot{\zeta}^{w}(t)  = -X(\zeta^{w}(t))^{T} \D \mc{P}(B^{T}\zeta^{w}(t))\dot{\omega}
\end{align}
solves \eqref{eqn.vwDynamics}. The algorithmic procedure for the construction of $X(\zeta^{w})$ is provided in \cite[p. 659]{Biggs1997} and is reviewed in the appendix. The construction of $X(\zeta^{w})$ has an explicit graph theoretic interpretation and the matrix has a characteristic structure.
The entries of $X(\zeta^{w})$ are fractions of polynomials, with each polynomial being the sum of products of different functions $\nabla^{2}\mc{P}_{k}^{-1}(\cdot)$.
Since, by assumption, all $\nabla^{2}\mc{P}_{k}^{-1}(\cdot)$ are strictly positive and continuously differentiable,  $X(\zeta^{w})$ is continuously differentiable.
Note that if the assumptions of Proposition \ref{prop.Feasibility} hold, the optimal flow problem \eqref{prob.OFP} has at each time instant a feasible solution, and thus admits a finite dual solution.
We are now ready to prove Proposition \ref{eqn.FFfeasibility}. \\

\begin{proof} \textit{(of Proposition \ref{eqn.FFfeasibility})}
We prove the proposition by constructing the corresponding feed-forward controller.  Consider the dynamical system
 \begin{align}
\begin{split} \label{sys.FFcontrollerRouting}
 \dot{\zeta} &=  -X(\zeta)^{T} \D \mc{P}(B^{T}\zeta)B^{T}(BB^{T})^{\dagger} Ps(w) \\
 \lambda &= \nabla \mc{P}^{-1}(B^{T}\zeta),
\end{split}
\end{align}
where $X(\zeta)$ is as constructed in \eqref{eqn.TreeSol} and $(BB^{T})^{\dagger}$ is the generalized inverse of $BB^{T}$.
If the initial condition $\zeta(0)$ is chosen as an optimal dual solution at time $t=0$, that is $(\zeta(0),w(0)) \in \Gamma_{D}$, then the solution $\zeta(t)$ to \eqref{sys.FFcontrollerRouting} originating at $\zeta(0)$ will be a solution to \eqref{eqn.vwDynamics}, and will therefore be an optimal dual solution for all times, i.e., $(\zeta(t),w(t)) \in \Gamma_{D}$ for all $t \geq 0$. 
Furthermore, if $\zeta(t)$ is an optimal dual solution at time $t$, then  $\lambda(t)= \nabla \mc{P}^{-1}(B^{T}\zeta(t))$ is an optimal primal solution, i.e., $(\lambda(t),w(t) )\in \Gamma$.
Thus, Proposition \ref{eqn.FFfeasibility} follows with $\Phi(\zeta,w) =  -X(\zeta)^{T} \D \mc{P}(B^{T}\zeta)B^{T}(BB^{T})^{\dagger} Ps(w)$, $\Psi(\zeta) = \nabla \mc{P}^{-1}(B^{T}\zeta)$ and $\zeta(0)$ such that $(\zeta(0),w(0)) \in \Gamma_{D}$.
\end{proof}

\section{Feedback Controller Design} \label{sec.Feedback}

We can use the proposed feed-forward controller now to design a dynamic feedback controller for the inventory system \eqref{sys.Inventory1},  that ensures convergence to the optimal routing without being initialized with an optimal solution. 
In addition to the optimal routing, the feedback controller should achieve a \emph{balancing} of the inventory levels of \eqref{sys.Inventory1}.
Network balancing is widely accepted as control objective in storage networks, as it promises to make best use of the available storage, \cite{Danielson2013}, \cite{DePersis2013}. 
\begin{definition}
The \emph{time-varying optimal distribution problem} is solvable for the system \eqref{sys.Inventory1}, if there exists a controller of the form
\begin{align} \label{sys.ControllerBasic}
\begin{split}
\dot{\zeta} &= \Phi(\zeta,w,x) \\
\lambda &= \Psi(\zeta,x)
\end{split}
\end{align} 
such that any solution $(x(t),w(t),\zeta(t))$ of the closed-loop system \eqref{sys.Inventory1}, \eqref{sys.Inventory_w}, \eqref{sys.ControllerBasic} is bounded and satisfies
(i) $\lim_{t \rightarrow \infty} B^{T}x(t) = 0$; and
(ii)  $\lim_{t\rightarrow \infty} \mathrm{dist}_{\Gamma}\bigl( \lambda(t),w(t) \bigr) = 0.$ 
\end{definition}

Note that the flow constraints are treated here as soft constraints, that must be satisfied only asymptotically. 
Furthermore, the proposed problem formulation allows to use the supply/demand vector $w$ directly in the controller.

We will construct a controller \eqref{sys.ControllerBasic} solving the time-varying optimal distribution problem on the basis of the feed-forward controller designed in the previous section.
For the sake of a compact presentation, we define the matrix
\begin{align} \label{eqn.Xi}
 \Xi(\zeta):= X(\zeta)^{T} \D \mc{P}(B^{T}\zeta)B^{T}(BB^{T})^{\dagger}. 
 \end{align}
Furthermore, the projection matrix for the space orthogonal to the all ones vector is
\begin{align} \label{eqn.Projection}
Q = (I_{n} - \frac{1}{n}\1_{n}\1_{n}^{T}).
\end{align}
For designing the feedback controller, we need to relax the original assumption on the feasibility of the flows (i.e., Assumption \ref{ass.Cut}).
\begin{assumption} \label{ass.CutInfty}
For any set of nodes $S \subset V$ it holds that $\bar{\lambda}_{Q(S)} = \infty$.
\end{assumption}
Note that this assumption does not imply that all lines of the network have an infinite capacity, but rather that there exists a spanning tree in the network with infinite capacity.
 The main result of this paper is a controller that routes the flow asymptotically optimal through the network.
\begin{theorem}[Feedback Controller]
Consider the inventory dynamics \eqref{sys.Inventory1} with the supply generated by the dynamical system \eqref{sys.Inventory_w}. 
Suppose the Assumptions \ref{ass.Boundedness}, \ref{ass.BalancedFlow}, \ref{ass.Legendre}, and \ref{ass.CutInfty} hold.
Then,  the dynamic controller
\begin{align}
\begin{split} \label{sys.PriceController}
\dot{\zeta} &= - Q\Xi(\zeta)\Bigl( Ps(w)  + x+  k \bigl(B \nabla \mc{P}^{-1}(B^{T}\zeta) + Pw \bigr) \Bigr) \\
\lambda & = \nabla \mc{P}^{-1}(B^{T}\zeta) - B^{T}x.
\end{split}
\end{align}
with $k > 0$ a scalar controller gain, solves the time-varying optimal distribution problem for any initial inventory levels $x(0)$ and any controller initialization $\zeta(0)$, that is, all trajectories remain bounded, and (i) $\lim_{t \rightarrow \infty} B^{T} x  = 0$ and (ii) $\lim_{t\rightarrow \infty} \mathrm{dist}_{\Gamma}\bigl( \lambda(t),w(t) \bigr) = 0.$
\end{theorem}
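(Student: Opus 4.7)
The plan is to rewrite the closed loop in error coordinates where the state-dependent Laplacian $L(\zeta)$ cancels, reducing the problem to LaSalle on an autonomous linear error system. Let $\bar{x}=\tfrac{1}{n}\mathbf{1}_n^T x(0)$, which is conserved by Proposition~\ref{prop.Feasibility}(i), and set the balancing error $\tilde{x}(t)=x(t)-\bar{x}\mathbf{1}_n\in\mathbf{1}_n^\perp$. Denote $\lambda^\star(t)=\nabla\mc{P}^{-1}(B^T\zeta(t))$, so the output equation in \eqref{sys.PriceController} becomes $\lambda=\lambda^\star-B^T x$; by the Legendre-type assumption, $\lambda^\star(t)\in(-\bar{\lambda},\bar{\lambda})^m$ always. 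Define the dual feasibility residual $e(t)=B\lambda^\star(t)+Pw(t)$, which lies in $\mathbf{1}_n^\perp$ by Assumption~\ref{ass.BalancedFlow} and vanishes exactly when $(\zeta(t),w(t))\in\Gamma_D$. Substituting $\lambda$ into $\dot{x}=B\lambda+Pw$ produces immediately $\dot{\tilde{x}}=e-BB^T\tilde{x}$.

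The central algebraic step is the identity $L(\zeta)\,\Xi(\zeta)=Q$. This follows from the construction of $X(\zeta)$ in \eqref{eqn.TreeSol}: that construction is designed so that, for every $\dot{\omega}\in\mathbb{R}^m$, $-X(\zeta)^T\mathrm{D}\mc{P}(B^T\zeta)\dot{\omega}$ solves $L(\zeta)\dot{\zeta}^w=-B\dot{\omega}$, equivalently $L(\zeta)X(\zeta)^T\mathrm{D}\mc{P}(B^T\zeta)=B$ as matrices. Post-multiplying by $B^T(BB^T)^\dagger$ and using $BB^T(BB^T)^\dagger=Q$ gives $L(\zeta)\Xi(\zeta)=Q$. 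Differentiating $e$ yields $\dot{e}=L(\zeta)\dot{\zeta}+P\dot{w}$; substituting the $\zeta$-dynamics of \eqref{sys.PriceController}, exploiting $L(\zeta)Q=L(\zeta)$ (because $L(\zeta)\mathbf{1}_n=0$), and applying $L(\zeta)\Xi(\zeta)=Q$ to each of the three terms $Ps(w)$, $x$, $ke$ (with $Q$-projections $Ps(w)$, $\tilde{x}$, $ke$) makes the Laplacian drop out, producing $\dot{e}=-\tilde{x}-ke$. Thus, in coordinates $(\tilde{x},e)\in\mathbf{1}_n^\perp\times\mathbf{1}_n^\perp$, the closed loop has collapsed to the autonomous linear system
\[
\dot{\tilde{x}}=e-BB^T\tilde{x},\qquad \dot{e}=-\tilde{x}-ke.
\]

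Next, take $V(\tilde{x},e)=\tfrac{1}{2}\|\tilde{x}\|^2+\tfrac{1}{2}\|e\|^2$. The cross terms cancel and $\dot{V}=-\|B^T\tilde{x}\|^2-k\|e\|^2\le 0$, so $(\tilde{x},e)$ is bounded; $x$ and $\lambda$ inherit boundedness from $\lambda^\star\in(-\bar{\lambda},\bar{\lambda})^m$, and $\zeta$ is bounded since $\mathbf{1}_n^T\zeta$ is conserved by the $Q$-projected dynamics while $B^T\zeta=\nabla\mc{P}(\lambda^\star)$ stays bounded as long as $\lambda^\star$ remains away from the capacity boundary. LaSalle on the reduced linear system identifies the largest invariant subset of $\{\dot{V}=0\}$: $e=0$ forces $\dot{e}=-\tilde{x}=0$, and connectedness of $\mc{G}$ (implicit in Assumption~\ref{ass.CutInfty}) makes $B^T$ injective on $\mathbf{1}_n^\perp$, so $\tilde{x}(t)\to 0$ and $e(t)\to 0$.

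Claim (i) is then immediate from $B^T x=B^T\tilde{x}$. For claim (ii), $\nabla\mc{P}(\lambda^\star(t))=B^T\zeta(t)\in\mc{R}(B^T)$ holds by construction, so $(\lambda^\star(t),w(t))$ always satisfies the first defining condition of $\Gamma$; strict monotonicity of $\nabla\mc{P}$ combined with continuity of the optimizer $\lambda^w$ in $w$ on the compact set $\mc{W}$ converts $B(\lambda^\star-\lambda^w)=e\to 0$ into $\lambda^\star-\lambda^w\to 0$, whence $\lambda=\lambda^\star-B^T x\to\lambda^w$ and $\mathrm{dist}_\Gamma(\lambda(t),w(t))\to 0$. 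The main obstacle is the identity $L(\zeta)\Xi(\zeta)=Q$: it is what makes the nonlinear, state-dependent Laplacian factor out, converting the entire problem into LaSalle on a linear error system. A secondary technical point is the monotonicity and compactness argument needed to pass from $e\to 0$ to $\lambda^\star-\lambda^w\to 0$ and, in particular, to rule out $\lambda^\star$ drifting toward the capacity boundary, which is what secures boundedness of $\zeta$.
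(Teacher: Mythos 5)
Your Lyapunov argument coincides with the paper's proof in all essentials: your $(\tilde x, e)$ are the paper's $(x - x^{w}, Z(\zeta,w))$, your identity $L(\zeta)\Xi(\zeta) = Q$ is exactly the computation the paper carries out term by term (via $L(\zeta)X(\zeta)^{T}\D\mc{P}(B^{T}\zeta) = B$ and $BB^{T}(BB^{T})^{\dagger} = Q$), and your $\dot V = -\|B^{T}\tilde x\|^{2} - k\|e\|^{2}$ is the paper's \eqref{eqn.Udot}. The only stylistic difference is that you invoke LaSalle on the reduced linear error system, whereas the paper bounds $(x-x^{w})^{T}BB^{T}(x-x^{w}) \geq s_{2}\|x - x^{w}\|^{2}$ on $\1_{n}^{\perp}$ and applies the Comparison Lemma to obtain exponential decay; both are valid, and the paper's version additionally yields a rate.

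The genuine gap is in the boundedness of $\zeta$ (and hence of $\lambda$). You assert that $\lambda^{\star} = \nabla\mc{P}^{-1}(B^{T}\zeta)$ is bounded because it lies in $(-\bar\lambda,\bar\lambda)^{m}$, but Assumption \ref{ass.CutInfty} forces $\bar\lambda_{k} = \infty$ on at least a spanning tree of edges, so this gives nothing on those components; and for the finite-capacity edges the real danger is $\lambda^{\star}_{k} \rightarrow \pm\bar\lambda_{k}$, which by the Legendre property sends $[B^{T}\zeta]_{k} = \nabla\mc{P}_{k}(\lambda^{\star}_{k})$ to infinity while $e = B\lambda^{\star} + Pw$ can remain perfectly bounded. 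You flag this as ``a secondary technical point'' but do not close it, and the only use you make of Assumption \ref{ass.CutInfty} is connectedness of the graph, which is not its role. The paper's proof closes exactly this hole: boundedness of $W$ gives $B\nabla\mc{P}^{-1}(B^{T}\zeta(t)) = c(t)$ with $c(t)$ bounded; $\zeta(t)$ is then a dual optimum of the flow problem $\min\sum_{k}\mc{P}_{k}(\tilde\lambda_{k})$ subject to $B\tilde\lambda = c(t)$, which is feasible precisely because every cut has infinite capacity; strict convexity gives a unique, strictly interior optimizer with finite dual, and together with the conservation of $\1_{n}^{T}\zeta$ this pins $\zeta(t)$ down as a bounded quantity. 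Note also that your final step for claim (ii) --- passing from $e\rightarrow 0$ to $\lambda^{\star} - \lambda^{w} \rightarrow 0$ by strict monotonicity --- implicitly requires this boundedness of $\zeta$ and $\zeta^{w}$, so the gap propagates there as well.
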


\begin{proof}
We recall that, due to the assumption of balanced supply and demand, it must hold that 
\begin{align} \label{eqn.xw}
x^{w}(t) = x_{0}^{w}\1_{n},\; \mbox{with}\; x_{0}^{w}= \frac{1}{n} \1_{n}^{T}x(0).
\end{align}
Note that the average inventory level is constant over time since $\1_{n}^{T}\dot{x} = 0$ and therefore
\begin{align*}
(x(t) - x^{w}(t)) \in \mc{R}(B).
\end{align*}
Furthermore, we have that the average of the controller variables $\frac{1}{n}\1_{n}^{T}\zeta(t)$ is constant since $\1_{n}^{T}\dot{\zeta} = 0$.
The optimal controller (i.e., dual) variables satisfy
\begin{align} \label{eqn.zetaw}
 B \nabla \mc{P}^{-1}(B^{T}\zeta^{w}(t) ) + Pw(t) = 0. 
 \end{align}

To prove the desired convergence result, consider a composed storage function of the form
\begin{align}
U(x-x^{w},\zeta,w) = V(x-x^{w}) + W(\zeta,w).
\end{align}
As the first component of the storage function, we consider
\begin{align}
V(x-x^{w}) = \frac{1}{2}\|x - x^{w}\|^{2}.
\end{align}
This function satisfies
\begin{align*}
\dot{V} =& (x - x^{w})^{T}(B \lambda + Pw) \\
=& -(x-x^{w})^{T}BB^{T}(x-x^{w})   \\
&+(x-x^{w})^{T}\left(B\nabla \mc{P}^{-1}(B^{T}\zeta) + Pw) \right),
\end{align*}
where we exploited the two relations \eqref{eqn.xw} and \eqref{eqn.zetaw}.

Now, as the second component of $U$ we consider the positive semidefinite function
\begin{align} \label{eqn.W}
W(\zeta,w) =  \frac{1}{2}\|B\nabla \mc{P}^{-1}(B^{T}\zeta) + Pw\|^{2}.
\end{align}
The function satisfies $W(\zeta,w) = 0$ if and only if $(\zeta,w) \in \Gamma_{D}$, that is if $\zeta = \zeta^{w}$.

In order to keep the notation compact, we introduce the vector valued function
\begin{align} \label{eqn.Z}
Z(\zeta,w) := B\nabla \mc{P}^{-1}(B^{T}\zeta) + Pw.
\end{align}

The directional derivative of $W$ satisfies
\begin{align*}
&\dot{W}=   Z(\zeta,w)^{T}\bigl(B \D \mc{P}^{-1}(B^{T}\zeta) B^{T}\dot{\zeta} + Ps(w)    \bigr) \\
&= Z(\zeta,w)^{T}\Bigl( - L(\zeta) \Xi(\zeta)\bigl( Ps(w) + x  + k Z(\zeta,w) \bigr)  + Ps(w) \Bigr),
\end{align*}
where we defined analogous to \eqref{eqn.WeightedLaplacian1} 
\[ L(\zeta) := \bigl(B \D \mc{P}^{-1}(B^{T}\zeta) B^{T} \bigr) \]
and used the fact that $L(\zeta)Q = L(\zeta).$

Recalling the definition of $\Xi(\zeta)$ in \eqref{eqn.Xi} and the definition of $X(\zeta)$ provided in the appendix, and using that $Ps(w) \in \mc{R}(B)$ it follows that  
\begin{align*}
 L(\zeta) \Xi(\zeta)Ps(w) &= L(\zeta)X(\zeta)^{T} \D \mc{P}(B^{T}\zeta)B^{T}(BB^{T})^{\dagger}Ps(w) \\
&= BB^{T}(BB^{T})^{\dagger}Ps(w) \\
&=(I - \frac{1}{n}\1_{n}\1_{n}^{T})Ps(w) \\
&= Ps(w).
\end{align*}

Futhermore, we have
\begin{align*}
 L(\zeta) \Xi(\zeta)x &= L(\zeta)X(\zeta) \D \mc{P}(B^{T}\zeta)B^{T}(BB^{T})^{\dagger}x \\
 &=L(\zeta)X(\zeta) \D \mc{P}(B^{T}\zeta)B^{T}(BB^{T})^{\dagger}(x -x^{w}) \\
 &=B B^{T}(BB^{T})^{\dagger}(x -x^{w}) \\
 &= (I - \frac{1}{n}\1_{n}\1_{n}^{T})(x -x^{w}) \\
 &= (x - x^{w}).
\end{align*}
To derive the previous conclusion, we exploited that $(BB^{T})^{\dagger}\1_{n} = 0$ (see \cite[Lemma 2]{Gutman2004}) and that $(x - x^{w}) \in \mc{R}(B)$.
Along the same line of argumentation, and by exploiting that $\1^{T}_{n}Z(\zeta,w) = 0$, we obtain
\[ L(\zeta) \Xi(\zeta)Z(\zeta,w) = Z(\zeta,w). \]
After these considerations it follows that
\begin{align*}
\dot{W}=& -Z(\zeta,w)^{T}(x - x^{w}) -k Z(\zeta,w)^{T} Z(\zeta,w).
\end{align*}
To complete the proof of convergence we consider now the composed (positive semidefinite) storage function
\begin{align*} 
U(x - x^{w},\zeta,w)& =  V(x-x^{w}) + W(\zeta,w) \\
&= \frac{1}{2}\|x - x^{w}\|^{2} + \frac{1}{2}\|Z(\zeta,w)\|^{2}.
\end{align*}
The derivative of this function satisfies
\begin{align}\label{eqn.Udot}
\begin{split}
\dot{U} =& -(x-x^{w})^{T}BB^{T}(x-x^{w})    + (x - x^{w})^{T}Z(\zeta,w) \\
&-Z(\zeta,w)^{T}(x - x^{w}) -k Z(\zeta,w)^{T}  Z(\zeta,w) \\
= & -(x-x^{w})^{T}BB^{T}(x-x^{w})   -k Z(\zeta,w)^{T}Z(\zeta,w).
\end{split}
\end{align}
Thus, the function $U$ is non-increasing under the closed-loop dynamics. 
Note that $\1^{T}(x-x^{w})= 0$ and therefore it holds that
\begin{align}
(x-x^{w})^{T}BB^{T}(x-x^{w}) \geq s_{2}\|x-x^{w}\|^{2}
\end{align}
where $s_{2} > 0$ is the smallest non-zero eigenvalue of the Laplacian matrix $BB^{T}$.\footnote{The scalar $s_{2}$ is known as the \emph{algebraic connectivity} of the network.} 

The derivative of $U(x-x^{w},\zeta,w)$ satisfies therefore
\begin{align}
\dot{U} \leq - \sigma U,
\end{align}
where $\sigma = \min \{s_{2},k\}$.
It follows now from the Comparison Lemma \cite[Lemma 3.4]{Khalil2002} that
\begin{align*}
U\bigr(x(t)-x^{w}(t),\zeta(t),w(t) \bigl)  \\
\leq \mathrm{e}^{-\sigma t}U\bigr(x(0)-x^{w}(0),\zeta(0),w(0) \bigl),
\end{align*}
and consequently
$\lim_{t \rightarrow \infty} U\bigr(x(t)-x^{w}(t),\zeta(t),w(t) \bigl)  = 0. $
This proves that $\lim_{t\rightarrow \infty} \| x(t)- x^{w}(t)\| = 0$, which implies $\lim_{t \rightarrow \infty} B^{T}x(t) = 0$ and $\lim_{t \rightarrow \infty} Z(\zeta(t),w(t)) = 0$. These conditions imply $\lim_{t \rightarrow \infty} \mathrm{dist}_{\Gamma}\bigl( \lambda(t),w(t) \bigr) = 0$. 

It remains to show boundedness of all trajectories. The uniform boundedness of $U\bigr( x(t)-x^{w},\zeta(t),w(t) \bigl)$ over time implies directly that $V = \|x(t)-x^{w}\|^{2}$ must remain bounded, and consequently $x(t)$ must remain bounded.
Now, the boundedness of $U$ implies boundedness of $W = \|B \nabla \mc{P}^{-1}(B^{T}\zeta) -Pw(t)\|^{2}$. By Assumption \ref{ass.Boundedness}, $w(t)$ is bounded and consequently also $\|B \nabla \mc{P}^{-1}(B^{T}\zeta(t))\|$ must remain bounded.
Thus, it holds that at each time instant there exists a vector $c(t)$ with finite norm such that
\begin{align*}
B \nabla \mc{P}^{-1}(B^{T}\zeta(t)) = c(t).
\end{align*}
It follows with the same argumentation as used previously to derive \eqref{eqn.DualCond}, that at each time instant the vector $\zeta(t)$ is an optimal dual solution to a network flow problem of the form
\begin{align*}
\min \; &\sum_{k=1}^{m} P_{k}(\tilde{\lambda}_{k}(t))\quad \mathrm{s.t.} \; B\tilde{\lambda}(t) = c(t). 
\end{align*}
Due to Assumption \ref{ass.CutInfty} the problem has a feasible solution, and due to strict convexity of the cost functions (Assumption \ref{ass.Legendre}) the optimal solution is unique.
From the optimality conditions follows that there exists at each time instant a unique $z(t)$ such that $\nabla \mc{P}(\tilde{\lambda}(t)) = z(t)$ and $z(t) = B^{T}\zeta(t)$. 
Since additionally at each time instant $\1^{T}\zeta(t) = \1^{T}\zeta(0)$, the vector $\zeta(t)$ is uniquely defined and must be bounded. This proves that all trajectories remain bounded.
\end{proof}

\begin{remark}
The controller  \eqref{sys.PriceController} has a characteristic structure. The matrix  $\Xi(\zeta)$, as defined in \eqref{eqn.Xi}, can be understood as the \emph{inverse Hessian}  of the dual problem \eqref{prob.OPP}, and the vector $Z(\zeta,w)$, as defined in \eqref{eqn.Z}, is the gradient of \eqref{prob.OPP}. 
This interpretation suggests that  \eqref{sys.PriceController} represents a continuous-time \emph{Newton algorithm} for the dual problem \eqref{prob.OPP}, augmented with a feed-forward component and a regulation term for the inventory levels.\footnote{The projection matrix $Q$ ensures only that the controller generates dual variables with a constant mean value. Note that the optimal dual variables are only defined up to a variation in $\mathrm{span} (\1)$. Thus, for any desired mean value, there exists an optimal dual solution having exactly this mean value.}
Furthermore, as the dual variables of the optimal flow problem \eqref{prob.OFP} are often given an economic interpretation as \emph{prices}, the controller \eqref{sys.PriceController} can be understood as an optimal pricing controller.
\end{remark}

\section{Design Example} \label{sec.Example}
We illustrate the proposed controller design on a small example network. We consider an inventory system with three inventories and three transportation lines as illustrated in Figure \ref{fig.Inventory}.

We assign an arbitrary orientation to the three connecting lines to obtain the incidence matrix
\begin{align*}
B = \begin{bmatrix} -1  &0 & 1 \\   1 & -1 & 0 \\  0  & 1  & -1 \end{bmatrix}.
\end{align*}

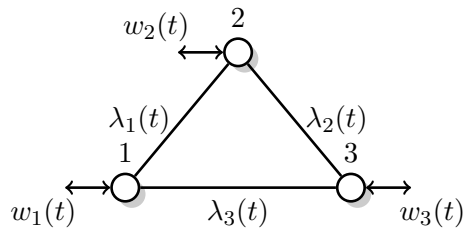
\begin{figure}
\begin{center}
\begin{tikzpicture}
[robot/.style={circle,draw=black,fill=white,minimum size=7pt, drop shadow={color=gray!80}, line width=1pt}]
\node[robot] (1) at (0,0) [label=above:$1$]{};
\node[robot] (2) at (1.5,1.8)[label=above:$2$] {};
\node[robot] (3) at (3,0 ) [label=above:$3$]{};
\draw[ line width=1pt](1)-- node[label=left:$\lambda_{1}(t)$]{}(2);
\draw[ line width=1pt](2)--node[label=right:$\lambda_{2}(t)$]{}(3);
\draw[  line width=1pt](3)--node[label=below:$\lambda_{3}(t)$]{}(1);
\draw[<->,  line width=1pt](-0.8,0)--node[label=below left:$w_{1}(t)$]{}(1);
\draw[<->,  line width=1pt](0.7,1.8)--node[label=above left:$w_{2}(t)$]{}(2);
\draw[<->,  line width=1pt](3.8,0)--node[label=below right:$w_{3}(t)$]{}(3);
\end{tikzpicture}
\caption{Illustration of the inventory system.}
\label{fig.Inventory}
\end{center}
\end{figure}

For the first two lines, we assume that the flows are unconstrained and cost functions are quadratic
\begin{align*}
P_{k}(\lambda_{k}) = \frac{q_{k}}{2}(\lambda_{k} - r_{k})^{2}, \; k \in \{1,2\}.
\end{align*}
For the simulations we choose $q_{1} = 0.2$, $q_{2} = 10$, $r_{1}=2$, and $r_{2} = 0$. The inverse of the gradients for these functions become simply
\begin{align*}
\nabla \mc{P}_{k}^{-1}(z_{k}) = q_{k}^{-1} z_{k} + r_{k}, \; k \in \{1,2\}.
\end{align*}

For the third transportation line, we assume that the the flow is constrained to $|\lambda_{e}(t)| \leq \bar{\lambda}_{3}$. We assign therefore to line three a strictly convex cost function that goes to infinity as $\lambda_{3}$ approaches $\pm \bar{\lambda}_{3}$. We choose here the cost function
\begin{align}
\mc{P}_{3}(\lambda_{3}) =  -c a \log \Bigl( \cos(\frac{\lambda_{3}}{a}) \Bigr) 
\end{align}
where $c > 0$ and $a = \frac{2}{\pi}\bar{\lambda}_{3}$.
For the simulation, we choose $c = 0.1$ and $\bar{\lambda}_{3} = 4$. 
Choosing this cost function, one obtains
\begin{align}
\nabla \mc{P}_{3}^{-1}(z) = a \tan^{-1}(\frac{z}{c}).
\end{align}
The function $\mc{P}_{3}(\lambda_{3})$ and the function $\nabla \mc{P}_{3}^{-1}(z_{3})$ are shown in Figure \ref{fig.dP3}.

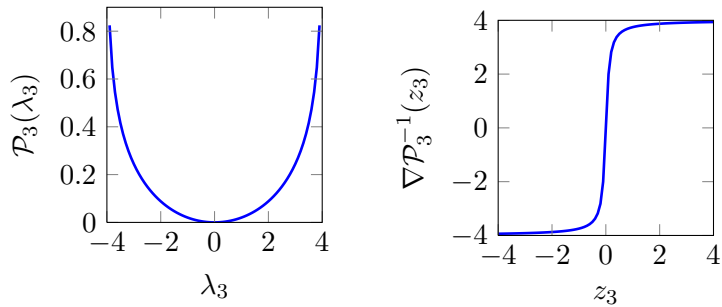
\begin{figure}[b]
\begin{center}
   \begin{subfigure}{0.23\textwidth}
%
%
%
%
\begin{tikzpicture}

\begin{axis}[%
width=0.7\textwidth,
height=0.7\textwidth,
scale only axis,
xmin=-4, xmax=4,
xlabel={$\lambda_{3}$},
ymin=-0, ymax=0.9,
ylabel={$\mc{P}_{3}(\lambda_{3})$},
axis on top]
\addplot [
color=blue,
solid,
line width=1.0pt
]
coordinates{
 (-3.9,0.824436300940856)(-3.8,0.648124220846293)(-3.7,0.54520084705538)(-3.6,0.472401947673393)(-3.5,0.416169202831292)(-3.4,0.370463615735637)(-3.3,0.332064056666194)(-3.2,0.299048065134134)(-3.1,0.270175891753477)(-3,0.244601330559427)(-2.9,0.221721112119886)(-2.8,0.20109010783208)(-2.7,0.182370593738008)(-2.6,0.165300371799717)(-2.5,0.149671917559193)(-2.4,0.135318272882773)(-2.3,0.122103224635857)(-2.2,0.109914296559583)(-2.1,0.0986576402563968)(-2,0.0882542400610606)(-1.9,0.0786370468149463)(-1.8,0.0697487811454006)(-1.7,0.0615402277048288)(-1.6,0.053968895110108)(-1.5,0.0469979521866832)(-1.4,0.0405953757225536)(-1.3,0.0347332621066663)(-1.2,0.0293872674001494)(-1.1,0.0245361491476362)(-1,0.0201613896237549)(-0.9,0.016246884928227)(-0.8,0.0127786878707594)(-0.7,0.00974479525564405)(-0.6,0.00713497221856434)(-0.5,0.00494060785025597)(-0.4,0.00315459758286232)(-0.3,0.00177124880237771)(-0.2,0.000786206949221938)(-0.0999999999999996,0.000196400027557497)(0,0)(0.0999999999999996,0.000196400027557497)(0.2,0.000786206949221938)(0.3,0.00177124880237771)(0.4,0.00315459758286232)(0.5,0.00494060785025597)(0.6,0.00713497221856434)(0.7,0.00974479525564405)(0.8,0.0127786878707594)(0.9,0.016246884928227)(1,0.0201613896237549)(1.1,0.0245361491476362)(1.2,0.0293872674001494)(1.3,0.0347332621066663)(1.4,0.0405953757225536)(1.5,0.0469979521866832)(1.6,0.053968895110108)(1.7,0.0615402277048288)(1.8,0.0697487811454006)(1.9,0.0786370468149463)(2,0.0882542400610606)(2.1,0.0986576402563968)(2.2,0.109914296559583)(2.3,0.122103224635857)(2.4,0.135318272882773)(2.5,0.149671917559193)(2.6,0.165300371799717)(2.7,0.182370593738008)(2.8,0.20109010783208)(2.9,0.221721112119886)(3,0.244601330559427)(3.1,0.270175891753477)(3.2,0.299048065134134)(3.3,0.332064056666194)(3.4,0.370463615735637)(3.5,0.416169202831292)(3.6,0.472401947673393)(3.7,0.54520084705538)(3.8,0.648124220846293)(3.9,0.824436300940856) 
};

\end{axis}
\end{tikzpicture}
    \label{fig.P3}
   \end{subfigure}
   \hspace{2em}
   \begin{subfigure}{0.23\textwidth}
%
%
%
%
\begin{tikzpicture}

\begin{axis}[%
width= 0.7\textwidth,
height = 0.7\textwidth,
scale only axis,
xmin=-4, xmax=4,
xlabel={$z_{3}$},
ymin=-4, ymax=4,
ylabel={$\nabla \mc{P}_{3}^{-1}(z_{3})$},
axis on top]
\addplot [
color=blue,
solid,
line width=1.0pt
]
coordinates{
 (-4,-3.93635128070379)(-3.9,-3.93471996824952)(-3.8,-3.93300285516556)(-3.7,-3.93119299115653)(-3.6,-3.92928265461357)(-3.5,-3.92726324259481)(-3.4,-3.92512514142683)(-3.3,-3.92285757382541)(-3.2,-3.92044841741129)(-3.1,-3.91788398817802)(-3,-3.91514878075572)(-2.9,-3.91222515507191)(-2.8,-3.90909295604946)(-2.7,-3.90572904903727)(-2.6,-3.90210674836596)(-2.5,-3.89819510921607)(-2.4,-3.89395804309383)(-2.3,-3.8893532034667)(-2.2,-3.88433056877779)(-2.1,-3.87883062249306)(-2,-3.87278198995059)(-1.9,-3.86609833314741)(-1.8,-3.85867421689492)(-1.7,-3.85037952606979)(-1.6,-3.84105180555567)(-1.5,-3.83048556292043)(-1.4,-3.81841703200112)(-1.3,-3.80450198422242)(-1.2,-3.78828259152328)(-1.1,-3.76913649298956)(-1,-3.74619586055557)(-0.9,-3.71821370018178)(-0.8,-3.68333260671548)(-0.7,-3.63866211759307)(-0.6,-3.57945234630997)(-0.5,-3.49733633448799)(-0.4,-3.37616695698105)(-0.3,-3.18066894120347)(-0.2,-2.81933105879653)(-0.0999999999999996,-2)(0,0)(0.0999999999999996,2)(0.2,2.81933105879653)(0.3,3.18066894120347)(0.4,3.37616695698105)(0.5,3.49733633448799)(0.6,3.57945234630997)(0.7,3.63866211759307)(0.8,3.68333260671548)(0.9,3.71821370018178)(1,3.74619586055557)(1.1,3.76913649298956)(1.2,3.78828259152328)(1.3,3.80450198422242)(1.4,3.81841703200112)(1.5,3.83048556292043)(1.6,3.84105180555567)(1.7,3.85037952606979)(1.8,3.85867421689492)(1.9,3.86609833314741)(2,3.87278198995059)(2.1,3.87883062249306)(2.2,3.88433056877779)(2.3,3.8893532034667)(2.4,3.89395804309383)(2.5,3.89819510921607)(2.6,3.90210674836596)(2.7,3.90572904903727)(2.8,3.90909295604946)(2.9,3.91222515507191)(3,3.91514878075572)(3.1,3.91788398817802)(3.2,3.92044841741129)(3.3,3.92285757382541)(3.4,3.92512514142683)(3.5,3.92726324259481)(3.6,3.92928265461357)(3.7,3.93119299115653)(3.8,3.93300285516556)(3.9,3.93471996824952)(4,3.93635128070379) 
};

\end{axis}
\end{tikzpicture}
   \label{fig.dP3inv}
   \end{subfigure}
\end{center}   
   \caption{The cost function $\mc{P}_{3}(\lambda_{3})$ and the corresponding inverse of the gradient $\nabla \mc{P}_{3}^{-1}(z_{3})$.}
   \label{fig.dP3}
  \end{figure}

To construct the controller, the matrix $X(\zeta)$ as defined in \eqref{eqn.TreeSol} is required. For convenience, we will use in the following the notation we write for convenience  $h_{k} = \nabla \mc{P}_{k}^{-1}(\zeta_{k}), k \in \{1,2,3\}$.  Following the procedure described in the appendix, one can determine $X$ as
\begin{align*}
X = \frac{1}{\bar{h}} \begin{bmatrix}
0 & h_{1}h_{3} + h_{1}h_{2} & h_{1}h_{2} \\
h_{2}h_{3} & 0 & h_{2}h_{3} + h_{1}h_{2} \\
h_{2}h_{3} + h_{1}h_{3} & h_{1} h_{3} & 0
\end{bmatrix}
\end{align*}
where $\bar{h} = h_{1}h_{2}+ h_{2}h_{3} + h_{1}h_{3}$ .

For the simulations, we assume that the flow is generated by linear dynamical systems and that it is consequently an harmonic signal. We assume furthermore, that the supply generated at one node is an instantaneous demand at another node, i.e.,
\begin{align*}
w_{i}(t) = \sum_{ k = 1}^{3} b_{ik} \bigl(\kappa_{k} \sin(\phi_{k}t + \rho_{k}) \bigr).
\end{align*}
For the simulation we have chosen $\kappa_{1}=2$, $\kappa_{2} = \kappa_{3}=4$, $\phi_{1}=2$, $\phi_{2} = 4$, $\phi_{3} = 8$, and $\rho_{1} = 0$, $\rho_{2} = 2$, $\rho_{3} = 3.14$.
We consider a scenario, where the supply/demand evolves first continuously, but chances instantaneously at a certain time instant. That is, at time $t=3$, we change the phase shift of the reference signal to 
$\rho_{1} = 4$, $\rho_{2} = 6$, $\rho_{3} = 2$.
The simulation results are shown in Figure \ref{fig.Sim}. The controller balances all inventory levels and routes the flows such that they all approach the optimal flows. Note that the flow $\lambda_{3}$ remains always within its capacity bounds. It seems remarkable, that the transportation line $2$, which has a fairly high cost, is mainly used when the flow in line 3 is close to the capacity bounds.
At time $t=3$ the reference signal is changed instantaneously as described above, and the controller needs to readjust. However, the feedback controller can tolerate this change and regulates the flows to the new optimal strategy. The simulation study illustrates the robustness of the proposed feedback controller.

\begin{figure}[t]
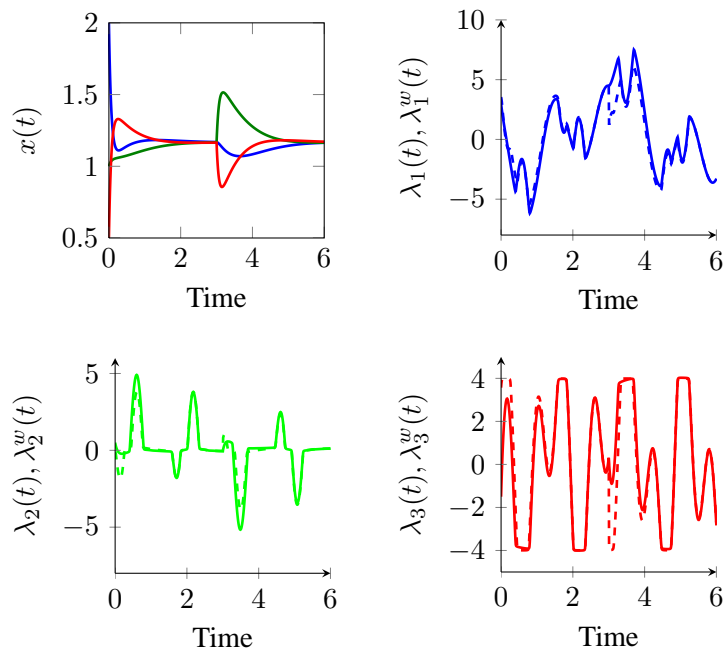

\begin{center}
\begin{subfigure}{0.23\textwidth}
\input{x.tikz}
\end{subfigure}
\hspace{2em}
\begin{subfigure}{0.23\textwidth}
\input{Flow1.tikz}
\end{subfigure}

\begin{subfigure}{0.23\textwidth}
\input{Flow2.tikz}
\end{subfigure}
\hspace{2em}
\begin{subfigure}{0.23\textwidth}
\input{Flow3.tikz}
\end{subfigure}
\end{center}
\caption{Inventory levels (top left) and flows on the lines $\lambda_{k}$ (solid) together with the optimal flows $\lambda_{k}^{w}$ (dashed).} \label{fig.Sim}
\end{figure}

\section{Conclusions}
We proposed a feedback controller for an optimal routing control in distribution networks with storage and time-varying supply. 
The controller is composed of a feed-forward component, generating the time-varying optimal dual variables, and a feedback component, regulating the system to the flows to the optimal trajectory. 
The construction of the feed-forward component of the controller utilizes the analytic solution to a generalized Dirichlet problem.
With certain choices of the cost functions, the proposed controller will also respect capacity constraints on the transportation lines. 
One can expect that the presented results can be generalized in various directions. A relevant extension will be a distributed implementation of the controller. 
Furthermore, future research will focus on a controller design that does note require a measurement of the external supply and demand. 

\appendix
\subsection{Analytic Solution to the Generalized Dirichlet Problem}

We review here the analytic solution for generalized Dirichlet problems provided in \cite{Biggs1997}. Consider the problem of finding a vector $z$ that solves an equation of the form
\begin{align} \label{eqn.AppGDP}
(B H B^{T}) z = B v,
\end{align}
where $B \in \mathbb{R}^{n \times m}$ is the incidence matrix of a graph $\mc{G} = (V,E)$ (with arbitrary orientation), 
$H$ is a diagonal matrix with positive elements on the diagonal, and $v$ is a given vector. 
It is known (\cite{Biggs1997}) that a solution to \eqref{eqn.AppGDP} is 
\begin{align}\label{z}
z = X^{T}H^{-1}v.
\end{align}
The analytic construction of the matrix $X$  is reported and justified in \cite[p. 659]{Biggs1997}, and recalled below. 
In what follows we say that the node $i \in V$ is the positive end of edge $k \in E$ if $[B]_{k,i} = 1$ and the negative end if $[B]_{k,i} = -1$. \\

We call a \emph{spanning tree} of the graph $\mc{G}$ an acyclic, connected subgraph $\mc{T} \subset \mc{G}$, defined on the same node set, 
\[ \mc{T} = (V,T)\]
having an edge set $T$ being exactly of size $|T| = n-1$.

Consider now any edge $l \in T$. After removing $l$ from $T$, the spanning tree will partition into two connected components. The connected component that contains the positive end of $l$ will be denoted with $\mc{T}^{+}_{l}$. 
For the set of nodes forming the connected component $\mc{T}^{+}_{l}$, we will write $U(\mc{T},l)$.  

Each edge $k \in E$ has a weight $h_{k}$, which is the $k$th diagonal element of the matrix $H$. 
The \emph{weight of a spanning tree} is now defined as
\[ h(\mc{T}) = \prod_{k \in T} h_{k}. \]
Furthermore, we define the sum of the weights of all spanning trees of $\mc{G}$ by
\[ \bar{h} = \sum_{\mc{T}} h(\mc{T}). \]

We can now construct the matrix $X \in \mathbb{R}^{m \times n}$ in (\ref{z}) according to the following procedure. For each edge $k \in E$ and each node $i \in V$ define the corresponding entry of $X$ as
\[ (X)_{ki} = \frac{1}{\bar{h}} \sum_{ \{ \mc{T} \; : \; i \in U(\mc{T},k) \} }  h(\mc{T}),\]
that is the sum of the weights of all spanning trees for which the node $i$ is contained in the positive connected component. 

The proposed method can naturally be used to solve the equation \eqref{eqn.vwDynamics}. There, the weighting matrix is $H = \D \mc{P}^{-1}(B^{T}\zeta^{w}(t))$. The time-varying nature of the weights has no influence on the solution. In fact, since the solution is analytically constructed, we will obtain expressions for $X$ and $H$ that depend explicitly on the state. \\

{
\bibliographystyle{IEEEtran}
\bibliography{Literature}
}

\end{document}